\documentclass[12pt]{amsart}
\usepackage[utf8]{inputenc}

\usepackage{microtype,amsmath,amsthm,amssymb}
\usepackage{parskip}
\usepackage{geometry}
 \usepackage[hidelinks]{hyperref}
\geometry{text={0.78\paperwidth, 1.05\paperwidth}}
\author{Paul Pollack} 
\address{Department of Mathematics \\ University of Georgia \\ Athens, GA 30602}
\email{pollack@uga.edu}
\thanks{P.P. is supported by NSF award DMS-2001581.}
 
\subjclass{Primary 11A25; Secondary 11N36, 11N64}

\author{Akash Singha Roy}
\address{ESIC Staff Quarters No.: D2\\ 143 Sterling Road, Nungambakkam\\Chennai 600034\\ Tamil Nadu, India.}
\email{akash01s.roy@gmail.com}

\renewcommand\phi\varphi

\renewcommand{\pod}[1]{\allowbreak\mathchoice
  {\if@display \mkern 18mu\else \mkern 8mu\fi (#1)}
  {\if@display \mkern 18mu\else \mkern 8mu\fi (#1)}
  {\mkern4mu(#1)}
  {\mkern4mu(#1)}
}
\usepackage{graphicx}
\DeclareMathAlphabet{\curly}{U}{rsfs}{m}{n}
\newcommand{\ff}{\mathbf{f}}
\newcommand\Z{\mathbb{Z}}
\newtheorem{thm}{Theorem}[section]

\newtheorem{prop}[thm]{Proposition}
\newtheorem{lem}[thm]{Lemma}

\theoremstyle{remark}
\newtheorem*{rmk}{Remark}

\newcommand\ord{\mathrm{ord}}
\begin{document}
\title[Polynomial-like multiplicative functions]{Joint distribution in residue classes of polynomial-like multiplicative functions}
\begin{abstract} Under fairly general conditions, we show that families of integer-valued polynomial-like multiplicative functions are uniformly distributed in coprime residue classes mod $p$, where $p$ is a growing prime (or nearly prime) modulus. This can be seen as complementary to work of Narkiewicz, who obtained comprehensive results for fixed moduli. \end{abstract}
\maketitle

\section{Introduction}
%\cite{LPR21}
For any integer-valued arithmetic function, it is reasonable to ask how the values of $f$ are distributed in arithmetic progressions. As stated, this problem is far too general; to get any traction, it is necessary to restrict $f$. Let us suppose that $f$ is multiplicative and that $f$ is \textsf{polynomial-like}, in the sense that there is a polynomial $F(T) \in \Z[T]$ such that $f(p) = F(p)$ for every prime number $p$. In this case, Narkiewicz (beginning in \cite{narkiewicz66}) has made a comprehensive study of the distribution of $f$ in coprime residue classes. For a thorough survey of this work, see Chapter V in \cite{narkiewicz84}. See also \cite{narkiewicz12} for a more recent contribution to this subject by the same author.

In 1982, Narkiewicz \cite{narkiewicz82} observed that his methods could be applied to study the joint distribution of several functions. We state a special case of the main theorem of \cite{narkiewicz12}. Let $f_1, \dots, f_K$ be a finite sequence of multiplicative, integer-valued arithmetic functions. Say that $f_1,\dots,f_K$ is \textsf{nice} if the following conditions hold:
\begin{enumerate}
    \item[(i)] Each $f_k$ is polynomial-like for a nonconstant polynomial: There is a nonconstant polynomial $F_k(T) \in \Z[T]$ such that $f_k(p)=F_k(p)$ for all primes $p$,
\end{enumerate}
and
\begin{enumerate}
\item[(ii)] $F_1(T) \cdots F_{K}(T)$ has no multiple roots.
\end{enumerate}

If $f_1,\dots,f_K$ is a nice family, a prime $p$ is called \textsf{good} for $f_1,\dots,f_K$ if  (a) $p>5$, (b) $p > (1+\sum_{k}\deg{F_k}(T))^2$, (c) $p$ does not divide the leading coefficient of any $F_k(T)$, and (d) $p$ does not divide the discriminant of $F_1(T)\cdots F_K(T)$. For any fixed nice family $f_1,\dots, f_K$, all but finitely many primes are good.
Narkiewicz proves that if every prime divisor of $q$ is good, and one restricts attention to $n$ for which the values $f_1(n),\dots,f_K(n)$
are coprime to $q$, then those values are asymptotically jointly uniformly distributed among the coprime residue classes modulo $q$. More precisely: For every choice of integers $a_1,\dots,a_K$ coprime to $q$, we have \begin{equation}\label{eq:joint}\sum_{\substack{n \le x \\ (\forall k)~f_k(n)\equiv a_k\pmod{q}}} 1 \sim \frac{1}{\phi(q)^{K}} \sum_{\substack{n \le x \\ \gcd(\prod_{k=1}^{K} f_k(n),q)=1}} 1,\end{equation}
as $x\to\infty$. (It is proved along the way that the right-hand side of \eqref{eq:joint} tends to infinity under the same hypotheses.) In particular, we get joint uniform distribution in coprime residue classes mod $p$ for all good primes $p$.

So far everything that has been said concerns the distribution to a fixed modulus $q$. It is natural to also consider the distribution when $q$ grows with $x$. We prove a joint uniform distribution result of this kind for nice families valid when the modulus $q$ is prime or ``nearly prime''. Here ``nearly prime'' means that $\delta(q)$ is small where
\[ \delta(q) := \sum_{p \mid q}\ \frac{1}{p}. \]

Our main theorem is as follows.

\begin{thm}\label{thm:joint} Fix a nice sequence $f_1, \dots, f_K$ of multiplicative functions and fix $\epsilon > 0$. Suppose that $q, x\to\infty$ with $\delta(q)=o(1)$ and $q\le (\log{x})^{\frac{1}{K}-\epsilon}$. Then \eqref{eq:joint} holds, uniformly in the choice of coprime residue classes $a_1,\dots,a_{K}$ mod $q$.
\end{thm}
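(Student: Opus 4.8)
The plan is to detect each condition $f_k(n)\equiv a_k\pmod q$ with Dirichlet characters mod $q$. Letting $\psi_k$ denote a character mod $q$ extended to be $0$ on integers not coprime to $q$, orthogonality rewrites the left-hand side of \eqref{eq:joint} as
\[
\frac1{\phi(q)^{K}}\sum_{\chi_1,\dots,\chi_K\bmod q}\Big(\prod_{k}\overline{\chi_k(a_k)}\Big)\,S(\chi_1,\dots,\chi_K),\qquad S(\chi_1,\dots,\chi_K):=\sum_{n\le x}\prod_{k}\psi_k(f_k(n)).
\]
The tuple of principal characters contributes exactly the right-hand side of \eqref{eq:joint}, namely $M/\phi(q)^K$, where $M$ is the number of $n\le x$ with $\gcd(\prod_k f_k(n),q)=1$. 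Since $\phi(q)^K\le q^K\le(\log x)^{1-K\epsilon}$, and since a routine estimate for the mean value of the non-negative multiplicative function $n\mapsto \mathbf 1[\gcd(\prod_k f_k(n),q)=1]$ — which vanishes only on primes lying in a set of relative density $O(\delta(q))=o(1)$ — gives $M\gg x(\log x)^{-o(1)}$, it suffices to prove $\tfrac1x\bigl|S(\chi_1,\dots,\chi_K)\bigr|\ll(\log x)^{-1+o(1)}$, uniformly over tuples $(\chi_1,\dots,\chi_K)$ that are not all principal.

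Fix such a tuple and put $h(n):=\prod_k\psi_k(f_k(n))$; this is multiplicative, $|h|\le1$, and $h(p)=\prod_k\chi_k(F_k(p))=:G(p)$ for $p\nmid q$, a quantity depending only on $p$ mod $q$. The crucial input is the character-sum bound
\[
\mu_q:=\frac1{\phi(q)}\sum_{\substack{n\bmod q\\ \gcd(n,q)=1}}\prod_k\chi_k(F_k(n)),\qquad |\mu_q|\ll_{K,\ff}\frac1{\sqrt{P^-(q)}},
\]
where $P^-(q)$ is the least prime factor of $q$. One obtains this by factoring the sum over prime powers $p^a\|q$ via the Chinese Remainder Theorem. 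At a prime $p$ all of whose $p$-components $\chi_{k,p}$ are principal, the local factor is $\phi(p^a)\bigl(1+O(1/p)\bigr)$ — the error accounting for the $\le\sum_k\deg F_k$ residues mod $p$ at which $\prod_k F_k$ vanishes — and these multiply to $1+O(\delta(q))=1+o(1)$. At a prime $p$ where some $\chi_{k,p}$ is non-principal, one opens the principal components by inclusion–exclusion and applies Weil's bound for multiplicative character sums $\sum_{n\bmod p}\rho\bigl(\prod_k F_k(n)^{e_k}\bigr)$ (writing $\rho$ for a character mod $p$ with $\chi_{k,p}=\rho^{e_k}$), together with its prime-power analogue (Cochrane–Zheng): the non-degeneracy hypothesis holds because goodness of $p$ makes $\prod_k F_k$ squarefree mod $p$ with pairwise coprime factors, so this polynomial is not a perfect $\ord(\rho)$-th power as soon as some $\chi_{k,p}$ is nontrivial. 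This gives a local factor $\ll_{K,\ff}\phi(p^a)/\sqrt p$; since at least one such prime occurs and the remaining such factors have absolute value $\le1$ once $x$ is large, the product is $\ll_{K,\ff}1/\sqrt{P^-(q)}$. As $P^-(q)\ge1/\delta(q)\to\infty$, we conclude $\mu_q=o(1)$, uniformly.

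It remains to deduce the bound on $S$ from Halász's theorem, in the form $\tfrac1x\bigl|\sum_{n\le x}h(n)\bigr|\ll(1+\mathcal M)e^{-\mathcal M}+1/\log x$ with $\mathcal M=\min_{|t|\le\log x}\sum_{p\le x}\tfrac{1-\operatorname{Re}(h(p)p^{-it})}{p}$; thus it suffices to show $\mathcal M\ge(1-o(1))\log\log x$, i.e. $\operatorname{Re}\sum_{p\le x}h(p)\,p^{-1-it}=o(\log\log x)$ for every $|t|\le\log x$. Given a small $\gamma>0$, split this sum at $y=\exp((\log x)^{\gamma})$. The primes $p\le y$ contribute at most $\log\log y+O(1)=\gamma\log\log x+O(1)$ by the triangle inequality. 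For $y<p\le x$ use $h(p)=G(p)$ with $G$ periodic mod $q$ and invoke the Siegel–Walfisz theorem — available since $q\le(\log x)^{1/K}\le(\log z)^{A}$ for all $z\ge y$ once $A=A(\gamma,K)$ is chosen large — to write $\sum_{y<p\le z}G(p)=\mu_q\pi(z)+E(z)$ with $|E(z)|\ll_A q\,z\exp(-c\sqrt{\log z})$; after partial summation, and since $\exp(-c\sqrt{\log y})=\exp(-c(\log x)^{\gamma/2})$ decays faster than any power of $\log x$, the contribution of $E$ is $o(1)$ even after the factor $1+|t|\le 1+\log x$, while the main term contributes $\le|\mu_q|\bigl(\log\log x+O(1)\bigr)=o(\log\log x)$ by the previous paragraph. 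Hence $\operatorname{Re}\sum_{p\le x}h(p)p^{-1-it}\le(\gamma+o(1))\log\log x$ uniformly; letting $\gamma\to0$ gives $\mathcal M=(1-o(1))\log\log x$, so $\tfrac1x|S|\ll(\log x)^{-1+o(1)}$, which combined with the first paragraph proves the theorem.

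The principal difficulty is the middle step: securing a power saving for the sums $\sum_{n\bmod p^{a}}\prod_k\chi_k(F_k(n))$ with implied constants depending only on $K$ and the $F_k$ — verifying Weil's non-degeneracy condition via the ``nice'' hypothesis and handling prime-power moduli. A minor but pervasive annoyance is that $\delta(q)=o(1)$ carries no rate, so each $o(1)$ above must be understood as holding once $x$ is large and $\delta(q)$ is correspondingly small, uniformly in the classes $a_1,\dots,a_K$.
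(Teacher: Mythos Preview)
Your argument is correct and takes a genuinely different route from the paper.

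The paper proceeds by an ``anatomy of integers'' decomposition: it peels off the $J=(K+1)D$ largest prime factors of $n$, writing $n=mP_J\cdots P_1$ with the $P_j$ large and distinct, then uses Siegel--Walfisz to replace the congruence conditions on the $P_j$ by a factor $1/\phi(q)^J$, and finally shows via a direct orthogonality computation (the Claim) that the number $\#V_m$ of admissible residue tuples $(v_1,\dots,v_J)$ is $\sim q^J/\phi(q)^K$. Your approach instead detects the congruences on $n$ globally by characters and bounds each non-principal piece $S(\chi_1,\dots,\chi_K)$ with Hal\'asz's inequality, feeding in the equidistribution of $G(p)=\prod_k\chi_k(F_k(p))$ along primes via Siegel--Walfisz. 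Both routes rest on the same character-sum input (Weil for prime moduli, Cochrane for prime powers; this is the paper's Proposition~\ref{prop:combinedcharsum}). Your method is shorter and more conceptual, importing Hal\'asz as a black box; the paper's is more self-contained and makes the mechanism by which the large prime factors produce equidistribution explicit, at the cost of a longer combinatorial argument.

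Two small corrections that do not affect your conclusion. First, the local saving at a prime $p\mid q$ where some $\chi_{k,p}$ is non-principal is in general only $\ll_{\ff} p^{-1/D}$ with $D=\sum_k\deg F_k$ (this is exactly Proposition~\ref{prop:combinedcharsum}), not $p^{-1/2}$; for prime-power moduli the Cochrane bound does not give square-root cancellation. Since you only use $\mu_q=o(1)$, and $P^{-}(q)\to\infty$ forces $(D-1)P^{-}(q)^{-1/D}\to 0$, this is harmless. Second, the lower bound $M\gg x(\log x)^{-o(1)}$ for the coprime count is not quite as routine as stated, because the indicator can vanish on prime powers in an uncontrolled way; the paper handles this carefully in Lemma~\ref{lem:coprimeasymp} and the remark following it, and you may simply cite that.
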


For example, let $f_1(n)=n$, $f_2(n)=\phi(n)$, and $f_3(n)=\sigma(n)$.  These form a nice family. By the result of Narkiewicz quoted above, the values of $n$, $\phi(n)$, $\sigma(n)$ coprime to $p$ are uniformly distributed in coprime residue classes mod $p$ for each fixed $p \ge 17$. It then follows from Theorem \ref{thm:joint} that this equidistribution holds uniformly for $17 \le p \le (\log{x})^{\frac13-\epsilon}$.

There are two  directions in which one might hope to strengthen Theorem \ref{thm:joint}. First, it would be desirable to weaken the condition $\delta(q)=o(1)$, e.g., by replacing it with Narkiewicz's condition that $q$ is divisible only by good primes. Such an improvement would seem to require a substantial new idea. Second, one might hope to enlarge the range of allowable $q$ past $(\log{x})^{\frac1K-\epsilon}$. It was proved in \cite{LPR21} that when $K=1$ and $f_1(n)=\phi(n)$, one can replace $(\log{x})^{1-\epsilon}$ with $(\log{x})^{A}$, for an arbitrary $A$, provided $q$ is restricted to primes. This might seem to suggest that  $(\log{x})^{\frac{1}{K}-\epsilon}$ in Theorem \ref{thm:joint} can  always be replaced with $(\log{x})^{A}$, with $A$ arbitrary. As we now explain, this is  too optimistic.

Suppose that $f_1,\dots,f_K$ is a fixed nice family with $K\ge 2$. Fix a prime $p_0$ with $f_1(p_0),\dots,f_K(p_0)$ all nonzero. Let $X:= 2(\log{x})^{\frac{1}{K-1}}$, and choose $p$ to be a prime in $(2X/3, X]$. As $x\to\infty$, there are at ``obviously'' at least $(1+o(1)) x/p\log{x} \ge (\frac{4}{3}+o(1)) x/p^K$ values of $n\le x$ having $f_k(n) \equiv f_k(p_0)\pmod{p}$ for all $k=1,\dots,K$, since $n$ can be taken as any prime congruent to $p_0\pmod{p}$. This shows that equidistribution in coprime residue cannot hold up to $X$. It is conceivable that in Theorem \ref{thm:joint} uniformity holds up to $(\log{x})^{\frac{1}{K-1}-\epsilon}$ (interpreted as $(\log{x})^{A}$, $A$ arbitrary, when $K=1$). Again, it would seem to require a new idea to decide this.

% Consider the nice family $n, \sigma(n)$. Suppose that $(\log{x})^{\frac12-\epsilon}$ in the conclusion of Theorem \ref{thm:joint} can be replaced by $X:=3\log{x}$. Let $p$ be a prime with $p \in (X/2,X]$. By uniform distribution, there are  $(1+o(1))x/p^2 \le (2/3+o(1))x/p\log{x}$ values of $n\le x$ with $n\equiv 1\pmod{p}$ and $\sigma(n)\equiv 2\pmod{p}$ (as $x\to\infty$). On the other hand, there are ``obviously'' at least $(1+o(1))x/p\log{x}$ such $n\le x$, since we can take for $n$ any prime congruent to $1\pmod{p}$.  So $(\log{x})^{\frac12-\epsilon}$ cannot be replaced with $3\log{x}$, even restricting $q$ to primes. By a more complicated argument (borrowing ideas from the proof of Theorem 1.3 of \cite{LPR21}), one can show uniformity fails already at $\eta\log{x}$, where $\eta>0$ is arbitrary. It is conceivable that uniform distribution holds when $p= o(\log{x})$; again, it would seem to require a new idea to decide this.

We conclude this introduction with a brief summary of the proof of Theorem \ref{thm:joint}: Split off the first several largest prime factors of $n$, say $n = m P_J \cdots P_1$, where $P^{+}(m) \le P_J \le \dots \le P_1$. (Here $J$ must be chosen judiciously; we also ignore $n$ with fewer than $J$ prime factors.) Most of the time, $P_J, \dots, P_1$ will appear to the first power only in $n$, so that $f_k(n) = f_k(m) f_k(P_J) \cdots f_k(P_1)$. Then given $m$, we use the prime number theorem for progressions (Siegel--Walfisz) and character sum estimates to understand the number of choices for $P_1, \dots, P_J$ compatible with the congruence conditions on $f_k(n)$. 

\subsection*{Notation and conventions} Throughout, the letters $p,P,r$, with or without subscripts, always denote primes whether or not this is explicitly mentioned. We use $P^{+}(n)$ for the largest prime factor of $n$, with the convention that $P^{+}(1)=1$. We write  $\mathfrak{f}(\chi)$ for the conductor of the Dirichlet character $\chi$.

\section{Preparation}
\subsection{Sieve lemmas} We will make frequent use of the following special case of the fundamental lemma of sieve theory, as formulated  in \cite[Theorem 7.2, p.\ 209]{HR74}.

\begin{lem}\label{lem:sievelem} Let $X \ge Z \ge 3$. Suppose that the interval $\mathcal{I} = (u,v]$ has length $v-u=X$. Let $\mathcal{P}$ be a set of primes not exceeding $Z$. For each $p \in \mathcal{P}$, choose a residue class $a_p \bmod{p}$.  The number of integers $n\in \mathcal{I}$ not congruent to $a_p\bmod{p}$ for any $p \in \mathcal{P}$ is 
\[ X \left(\prod_{p\in \mathcal{P}}\left(1-\frac{1}{p}\right)\right)\left(1 + O\left(\exp\left(-\frac{1}{2}\frac{\log{X}}{\log{Z}}\right)\right)\right). \]
\end{lem}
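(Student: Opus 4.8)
The plan is to specialize the Halberstam--Richert form of the fundamental lemma, \cite[Theorem 7.2]{HR74}. Take $\mathcal{A}=\mathcal{I}\cap\Z$ as the sifting sequence and $\mathcal{P}$ itself as the set of sifting primes: the relevant local quantities are, for squarefree $d$ all of whose prime factors lie in $\mathcal{P}$, the counts $\#\{n\in\mathcal{I}:\,n\equiv a_p\pmod p\ \text{for all}\ p\mid d\}$, which by the Chinese Remainder Theorem equal $X/d+R_d$ with $\lvert R_d\rvert\le 1$; so the density function is $\omega\equiv 1$. The hypotheses of \cite[Theorem 7.2]{HR74} are then immediate: $0\le\omega(p)/p\le\tfrac12<1$, and $\sum_{w\le p<z}\tfrac{\omega(p)\log p}{p}=\log(z/w)+O(1)$ by Mertens' theorem, so we are in the one-dimensional case with absolute constants. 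The quantity we must estimate is exactly the sifting function $S(\mathcal{A},\mathcal{P},z)$ for any $z>\max\mathcal{P}$.

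For $Z$ between a suitable absolute constant and $X^{1/2}$, I would invoke \cite[Theorem 7.2]{HR74} with $z$ the least prime exceeding $\max\mathcal{P}$ and with distribution level $D$ chosen (slightly above $\sqrt X$) so that $s:=\log D/\log z=\tfrac12\log X/\log Z$ exactly. The theorem then produces
\[
S(\mathcal{A},\mathcal{P},z)=X\prod_{p\in\mathcal{P}}\Bigl(1-\tfrac1p\Bigr)\bigl(1+O(e^{-s})\bigr)+O\Bigl(\sum_{\substack{d<D\\ d\mid P(z)}}\lvert R_d\rvert\Bigr),
\]
whose main-term error factor $e^{-s}=\exp(-\tfrac12\log X/\log Z)$ is exactly what is claimed. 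Since $\lvert R_d\rvert\le 1$ the remainder is at most $D$, and since $\log z\le\log Z+\log 2$ one has $D\le X^{1/2+O(1/\log Z)}$. On the other hand, Mertens' theorem gives the unconditional bound $\prod_{p\in\mathcal{P}}(1-1/p)\ge\prod_{p\le Z}(1-1/p)\gg 1/\log Z$, so the product of the main term and the error factor exceeds a fixed multiple of $(X/\log Z)\exp(-\tfrac12\log X/\log Z)$, which in turn dominates $D$ once $X$ is large and $Z$ is past an absolute constant. The admissibility constraint $D\ge z$ holds here precisely because $Z\le X^{1/2}$; bounded $X$ is trivial.

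It remains to dispose of two peripheral ranges of $Z$. When $Z$ is bounded, $\mathcal{P}$ has boundedly many elements and plain inclusion--exclusion gives $S(\mathcal{A},\mathcal{P},z)=X\prod_{p\in\mathcal{P}}(1-1/p)+O(1)$, whereas the claimed error term has size $\asymp X^{1-1/(2\log Z)}$, a power of $X$ with exponent bounded below, so the $O(1)$ is negligible. When $Z>X^{1/2}$, the sifting level is too large for the bound $\le D$ on the remainder to be usable, but there the claimed error factor is itself $\gg 1$, so it suffices to prove $S(\mathcal{A},\mathcal{P},z)\ll X\prod_{p\in\mathcal{P}}(1-1/p)$; I would get this by dropping all sifting conditions at primes exceeding $X^{1/4}$, applying the fundamental lemma to the truncated problem, and observing (again by Mertens) that $\prod_{X^{1/4}<p\le Z}(1-1/p)\gg 1$, so the truncation inflates the main term by only a bounded factor. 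In all three ranges the one genuine point is the bookkeeping that keeps the sieve's remainder term below the target error; the sieve input itself is entirely routine.
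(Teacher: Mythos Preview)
The paper does not give a proof of this lemma at all: it simply cites \cite[Theorem 7.2, p.~209]{HR74} and states the result as a special case of the fundamental lemma. Your proposal is exactly the verification the paper leaves implicit --- checking the sieve axioms for the interval sequence with $\omega\equiv 1$, choosing the level $D$ so that $s=\tfrac{1}{2}\log X/\log Z$, absorbing the crude remainder $\sum|R_d|\le D$ into the main-term error, and cleaning up the edge ranges of $Z$ --- and the bookkeeping you describe is correct. So your approach is the same as the paper's (invoke \cite[Theorem 7.2]{HR74}), only with the details spelled out.
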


The following application of Lemma \ref{lem:sievelem} yields
a lower bound for the ``numerator'' on the right-hand side of \eqref{eq:joint}. See Scourfield's Theorem 4 in \cite{scourfield84} for a closely related result (and compare with \cite{scourfield85}).

% \begin{lem}\label{lem:coprimeasymp} Fix a nice arithmetic function $f$. Suppose that $q, x\to\infty$ with $q= x^{o(1)}$ and $\delta(q)=o(1)$. The number of $n\le x$ for which $\gcd(f(n),q)=1$ is asymptotically equal to
% \begin{equation}\label{eq:coprimeasymp} x\prod_{\substack{p \le x \\ \gcd(f(p),q)>1}} \left(1-\frac{1}{p}\right).
% \end{equation}
% \end{lem}

\begin{lem}\label{lem:coprimeasymp} Fix a nice arithmetic function $f$ {\rm (}meaning that $f$ is nice when viewed as a singleton sequence{\rm )}. Suppose that $q, x\to\infty$ with $q= x^{o(1)}$ and $\delta(q)=o(1)$. The number of $n\le x$ for which $\gcd(f(n),q)=1$ eventually\footnote{meaning whenever $q, x$ are sufficiently large and $\frac{\log{q}}{\log{x}}, \delta(q)$ are sufficiently small}  exceeds
\begin{equation}\label{eq:coprimeasymp} \frac{1}{20}x\prod_{\substack{p \le x \\ \gcd(f(p),q)>1}} \left(1-\frac{1}{p}\right).
\end{equation}
\end{lem}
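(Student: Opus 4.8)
The plan is to replace $f$ by its defining polynomial $F$ and retain only the most transparent $n$. Since $f$ is multiplicative with $f(p)=F(p)$, any squarefree $n$ all of whose prime factors $p$ satisfy $\gcd(F(p),q)=1$ has $f(n)=\prod_{p\mid n}F(p)$ coprime to $q$. Writing $\mathcal{B}:=\{p\le x:\gcd(F(p),q)>1\}$ for the set of ``bad'' primes (note $\gcd(f(p),q)=\gcd(F(p),q)$, so $\prod_{p\in\mathcal{B}}(1-1/p)$ is exactly the product in \eqref{eq:coprimeasymp}), it therefore suffices to show
\[
\#\bigl\{n\le x:\ \mu^2(n)=1,\ p\mid n\Rightarrow p\notin\mathcal{B}\bigr\}\ >\ \tfrac1{20}\,x\prod_{p\in\mathcal{B}}\Bigl(1-\tfrac1p\Bigr).
\]

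Fix a cutoff $Z=x^{1/\Delta}$ with $\Delta=\Delta(x,q)\to\infty$ to be chosen, and split $\mathcal{B}=\mathcal{B}_{\le Z}\cup\mathcal{B}_{>Z}$. The left-hand side above is at least $N_0-N_1-N_2$, where $N_0=\#\{n\le x:\ p\mid n\Rightarrow p\notin\mathcal{B}_{\le Z}\}$, while $N_1$ counts the $n$ contributing to $N_0$ that moreover have a prime factor in $\mathcal{B}_{>Z}$, and $N_2$ counts the $n$ contributing to $N_0$ that are not squarefree. Lemma~\ref{lem:sievelem}, applied on $(0,x]$ with the residue class $0\bmod p$ removed for each $p\in\mathcal{B}_{\le Z}$, gives $N_0=(1+O(e^{-\Delta/2}))\,x\prod_{p\in\mathcal{B}_{\le Z}}(1-1/p)=(1+o(1))\,x\prod_{p\in\mathcal{B}_{\le Z}}(1-1/p)$.

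For $N_1$ and $N_2$ the guiding principle is to keep the condition ``$n$ has no prime factor in $\mathcal{B}_{\le Z}$'' at every step, so that the factor $\prod_{p\in\mathcal{B}_{\le Z}}(1-1/p)$ always travels along; the tools are Lemma~\ref{lem:sievelem} used as an upper bound and, for $Y\ge Z^2$, the elementary estimates $\#\{m\le Y:\gcd(m,\prod_{p\in\mathcal{B}_{\le Z}}p)=1\}\ll Y\prod_{p\in\mathcal{B}_{\le Z}}(1-1/p)$ and $\sum_{m\le Y,\ \gcd(m,\prod_{p\in\mathcal{B}_{\le Z}}p)=1}1/m\ll(\log Y)\prod_{p\in\mathcal{B}_{\le Z}}(1-1/p)$. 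Writing a non-squarefree $n$ as $p_0^2m$ (necessarily $p_0\notin\mathcal{B}_{\le Z}$) and applying the first estimate to $m$ when $p_0\le Z$, while the range $p_0>Z$ contributes only $\ll x/(Z\log Z)$, gives $N_2\le\bigl(\sum_p p^{-2}+o(1)\bigr)\,x\prod_{p\in\mathcal{B}_{\le Z}}(1-1/p)$. Writing $n=pm$ for a prime $p\in\mathcal{B}_{>Z}$ dividing $n$, and summing over $p$ when $p\le\sqrt x$ or over the cofactor $m$ when $p>\sqrt x$ (here one uses Brun--Titchmarsh and both elementary estimates), shows $N_1=o\bigl(x\prod_{p\in\mathcal{B}_{\le Z}}(1-1/p)\bigr)$ provided $\sum_{p\in\mathcal{B}_{>Z}}1/p=o(1)$. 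Finally, since $\delta(q)=o(1)$ forces the least prime factor of $q$ to infinity, every $\ell\mid q$ is eventually coprime to the (fixed, nonzero) content of $F$, so $F$ has at most $\deg F$ roots mod $\ell$; Brun--Titchmarsh and partial summation then yield $\sum_{p\in\mathcal{B}_{>Z}}1/p\ \ll_F\ \delta(q)\,(1+\log\Delta)$.

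It remains to choose $\Delta$ meeting all the constraints simultaneously: $\Delta\to\infty$ (to kill the error in $N_0$); $\Delta\le\tfrac{\log x}{2\log q}$, so that $Z\ge q^2$ and the Brun--Titchmarsh steps are legitimate; $\Delta\le\log x/(\log\log x)^2$, so that $x/(Z\log Z)$ is negligible against $x\prod_{p\le x}(1-1/p)\asymp x/\log x$; and $\log\Delta\le\delta(q)^{-1/2}$, so that $\delta(q)\log\Delta\to0$, hence $\sum_{p\in\mathcal{B}_{>Z}}1/p=o(1)$ and $N_1=o(N_0)$. The choice $\Delta:=\min\bigl\{\tfrac{\log x}{2\log q},\ \exp(\delta(q)^{-1/2}),\ \log x/(\log\log x)^2\bigr\}$ does the job, each of the three arguments tending to infinity because $q=x^{o(1)}$ and $\delta(q)=o(1)$. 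Then $N_0-N_1-N_2\ge\bigl(1-\sum_p p^{-2}-o(1)\bigr)\,x\prod_{p\in\mathcal{B}_{\le Z}}(1-1/p)$, and since $\sum_p p^{-2}<\tfrac12$ and $\prod_{p\in\mathcal{B}_{\le Z}}(1-1/p)\ge\prod_{p\in\mathcal{B}}(1-1/p)$, this eventually exceeds $\tfrac1{20}\,x\prod_{p\in\mathcal{B}}(1-1/p)$ with room to spare. I expect the main obstacle to be exactly this balancing act: Lemma~\ref{lem:sievelem} wants $Z$ small (for a usable error term), while controlling the tail $\sum_{p\in\mathcal{B}_{>Z}}1/p$ wants $Z$ large, and reconciling the two is where the hypotheses $q=x^{o(1)}$ and $\delta(q)=o(1)$ enter in a quantitatively essential way.
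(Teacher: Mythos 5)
Your proof is correct, but it departs from the paper's argument in two interesting ways. First, you restrict at the outset to squarefree $n$ with no prime factor in $\mathcal{B}$; the paper instead sieves and then factors each surviving $n$ with $\gcd(f(n),q)>1$ as $A_1 A_2 B$, where $A_2$ collects the prime powers $p^e\parallel n$ ($e\ge 2$) with $\gcd(f(p^e),q)>1$. This matters because $f(p^e)$ for $e\ge 2$ is unconstrained by the polynomial $F$, and the paper bounds the $A_2>1$ contribution by summing $1/M$ over squarefull $M$, getting $\zeta(2)\zeta(3)/\zeta(6)-1\approx 0.943$ --- which is why the paper's final constant has almost no slack ($1-0.945\approx 0.055>1/20$). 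Your squarefree reduction sidesteps $f(p^e)$ entirely and pays only $\sum_p p^{-2}\approx 0.452$, leaving much more room. Second, and more importantly, your ``balancing act'' with $\Delta\to\infty$ is unnecessary: the paper fixes its cutoff $U$, accepting a fixed-but-small $O(e^{-U/2})$ sieve error rather than an $o(1)$ error, and this is enough precisely because the target constant $1/20$ leaves a gap. You could do the same --- fix $\Delta$ large, and the tail estimate $\sum_{p\in\mathcal{B}_{>Z}}1/p\ll_F\delta(q)(1+\log\Delta)=o(1)$ comes for free from $\delta(q)=o(1)$ alone, with no tension against the sieve error. A small technical remark: in the $N_2$ estimate you quietly need the implied constant in $\#\{m\le Y:\gcd(m,\prod_{\mathcal{B}_{\le Z}}p)=1\}\ll Y\prod(1-1/p)$ to be $1+o(1)$ (not an arbitrary $\ll$-constant), which does hold in the relevant range $Y\ge x^{1-2/\Delta}$ via Lemma~\ref{lem:sievelem} since $\log Y/\log Z\ge\Delta-2$, but this deserves to be said; with a fixed $\Delta$ you would instead get a $1+O(e^{-\Delta/2})$ factor, which again suffices. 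In short: a correct and arguably cleaner decomposition than the paper's, but encumbered by an avoidable choice of growing cutoff.
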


\begin{rmk}\mbox{ }
\begin{enumerate}
    \item[(a)] With a small amount of additional effort, one could show that \eqref{eq:coprimeasymp} is the correct order of magnitude for this count of $n$. But we will not need this.
    \item[(b)] It will be useful momentarily to know that the product on $p$ in \eqref{eq:coprimeasymp} has size at least $(\log{x})^{o(1)}$. To see this, choose $F(T) \in \Z[T]$ with $f(p)=F(p)$ for all $p$. It suffices to show that 
    \[ \sum_{\substack{p \le x \\ \gcd(f(p),q) > 1}} 1/p = o(\log\log{x}). \]
    Let $\mathcal{S}$ be the set of primes $p\le x$ with $\gcd(f(p),q)>1$. For each prime $r$ dividing $q$, let $\mathcal{S}_r = \{p \in (r,x]: F(p)\equiv 0\pmod{r}\}$. Since $F$ has $O_f(1)$ roots modulo every prime $r$,
    \[ \sum_{r\mid q} \sum_{p \in \mathcal{S}_r} \frac{1}{p} \ll_{f} \log\log{x}\sum_{r\mid q} \frac{1}{r} = \delta(q) \log\log x = o(\log\log{x}). \]
Here the sum on $p\in \mathcal{S}_r$ has been estimated by partial summation and the Brun--Titchmarsh inequality. For each $r$ dividing $q$, there are $O_f(1)$ primes $p \le r$ with $F(p) \equiv 0 \pmod{r}$. So if we put $\mathcal{S}':=\mathcal{S}\setminus\cup_{r\mid q}\mathcal{S}_r$, then $\#\mathcal{S}' \ll_{f} \omega(q)$, and, writing $p_k$ for the $k$th prime in the usual increasing order,
\[ \sum_{p \in \mathcal{S}'} \frac{1}{p} \le \sum_{k=1}^{\#\mathcal{S}'}\frac{1}{p_k} \ll_{f}  \log\log(3\omega(q)) = o(\log\log{x}), \]
using the simple bound $\omega(q) = O(\log{x})$ in the last step.
\end{enumerate}
\end{rmk}

\begin{proof}[Proof of Lemma \ref{lem:coprimeasymp}] Fix a real number $U\ge 2$. We start by considering all $n\le x$ not divisible by any $p\le x^{1/U}$ with $\gcd(f(p),q)>1$. For large $q, x$ and small $\frac{\log{q}}{\log{x}}, \delta(q)$, where here and below ``large'' and ``small'' may depend on $U$, the sieve shows that the count of such $n$ is
\[ x \bigg(\prod_{\substack{p \le x^{1/U} \\ \gcd(f(p),q)>1}}\left(1-\frac{1}{p}\right)\bigg)(1+ O(\exp(-U/2))). \]
We proceed to bound from above the number of these  $n$ for which $\gcd(f(n),q)>1$. 

For each $n$ surviving our initial sieve but having $\gcd(f(n),q)>1$, we factor $n=A_1 A_2 B$, where
\[ A_1 = \prod_{\substack{p\parallel n \\ \gcd(f(p),q)> 1}} p, \quad A_2 = \prod_{\substack{p^e\parallel n,~e>1 \\ \gcd(f(p^e),q)> 1}} p^e, \quad\text{and}\quad B = n/A_1 A_2. \]
Then either $A_1>1$ or $A_2>1$. Moreover, every prime dividing $A_1$ exceeds $x^{1/U}$.

Suppose $A_2 > 1$. Since $A_2$ is squarefull, the number of $n\le x$ with $A_2 > x^{1/2}$ is $O(x^{3/4})$, which will be negligible for our purposes. So we assume that $A_2 \le x^{1/2}$. Given $A_2$, we  count the number of possibilities for the cofactor $A_1 B$. Note that $A_1 B\le x/A_2$ and that $A_1 B$ is free of prime factors $p \le x^{1/U}$ with $\gcd(f(p),q)>1$. So the sieve shows that the number of possibilities for $A_1 B$ is at most
\[ \frac{x}{A_2} \bigg(\prod_{\substack{p \le x^{1/U} \\ \gcd(f(p),q)>1}} \left(1-\frac{1}{p}\right)\bigg) \left(1 + O(\exp(-U/4))\right). \]
(We assume as usual that $q, x$ are large and $\frac{\log{q}}{\log{x}}, \delta(q)$ are small.) Since
\begin{equation*} \sum_{M \text{ squarefull}}\frac{1}{M} = \prod_{p} \left(1+\frac{1}{p^2}+\frac{1}{p^3}+\dots\right) 
= \frac{\zeta(2)\zeta(3)}{\zeta(6)} = 1.943\dots,
\end{equation*}
the count of $n$ with $A_2 > 1$ is bounded above by
\[ 0.945 x \bigg(\prod_{\substack{p \le x^{1/U} \\ \gcd(f(p),q)>1}} \left(1-\frac{1}{p}\right)\bigg) \left(1 + O(\exp(-U/4))\right). \]

Suppose now that $A_2=1$. Then $n=A_1 B$, where $A_1>1$ and every prime dividing $A_1$ exceeds $x^{1/U}$. Let $p$ be a prime dividing $A_1$, and write $A_1 =p S$. Then $n = pS B \le x$ where $S B \le x^{1-1/U}$. Given $S$ and $B$, the number of possible $p$ (and hence possible $n$) is, by Brun--Titchmarsh, at most
\[ \sum_{r \mid q} \sum_{\substack{p \le x/SB \\ F(p) \equiv 0 \pmod{r}}} 1 \ll_f \sum_{r \mid q} \frac{x}{r SB\log{(x/SBr)}} \ll \delta(q) U\frac{x}{\log{x}} \frac{1}{SB};  \]
here we have assumed that $q \le x^{1/2U}$, so that $x/SBr \ge (x/SB)/r \ge x^{1/2U}$ for every $r\mid q$. Summing on $S$ and $B$, the number of $n$ that arise is
\begin{multline*} \ll_f \delta(q) U\frac{x}{\log{x}} \bigg(\sum_{\substack{S \\ p \mid S \Rightarrow p \in (x^{1/U},x]}} \frac{1}{S}\bigg)\bigg( \sum_{\substack{B \le x\\ p \mid B,~ p \le x^{1/U}\Rightarrow \gcd(f(p),q)=1}}\frac{1}{B}\bigg) \\
\le \delta(q)  U\frac{x}{\log{x}} \bigg(\prod_{x^{1/U} < p \le x} \left(1-\frac{1}{p}\right)^{-2}\bigg) \bigg(\prod_{\substack{p \le x^{1/U} \\ \gcd(f(p),q)=1}}\left(1-\frac{1}{p}\right)^{-1}\bigg), \end{multline*}
which is 
\[ \ll \delta(q) U^3 \frac{x}{\log{x}} \prod_{p \le x^{1/U}} \left(1-\frac{1}{p}\right)^{-1} \prod_{\substack{p \le x^{1/U} \\ \gcd(f(p),q)>1}}\left(1-\frac{1}{p}\right) \ll \delta(q) U^2 x \prod_{\substack{p \le x^{1/U} \\ \gcd(f(p),q)>1}}\left(1-\frac{1}{p}\right). \]
But $\delta(q) = o(1)$, and so the final expression here is $o(x \prod_{p\le x^{1/U},~\gcd(f(p),q)>1}(1-1/p))$.

Collecting estimates shows that if  $U$ is fixed  sufficiently large, then eventually the number of $n\le x$ with $\gcd(f(n),q)=1$ exceeds
\[ \frac{1}{20}x \prod_{\substack{p\le x^{1/U}\\ \gcd(f(p),q)>1}}\left(1-\frac{1}{p}\right).\]
Bounding the product over $p \le x^{1/U}$ below by the product over $p\le x$ completes the proof.
 \end{proof}
 
Our second application of the sieve is an upper bound on the count of $n$ with few large prime factors. More precise results on this problem have been obtained by \cite{tenenbaum00}, but the comparatively simple Lemma \ref{lem:semismooth} below will suffice for our purposes.

Set $P_1^{+}(n) = P^{+}(n)$ and define, inductively, $P_{j+1}^{+}(n) =  P^{+}(n/P_1^{+}(n) \cdots P_j^{+}(n))$. Thus, $P_{j}^{+}(n)$ is the $j$th largest prime factor of $n$ (with multiple primes counted multiply), with $P_j^{+}(n)=1$ if $n$ has fewer than $j$ prime factors.
 
 \begin{lem}\label{lem:semismooth} Let $x\ge y \ge 10$. Let $J$ be an integer, $J\ge 2$. The number of $n\le x$ with $P_J^{+}(n)\le y$ is 
 \[ \ll_{J} x\frac{\log{y}}{\log{x}} (\log\log{x})^{J-1}. \]
 \end{lem}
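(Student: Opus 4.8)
I plan to prove the lemma by induction on $J$, the two ingredients being a ``peeling'' recursion for the counting function $h_J(x,y):=\#\{n\le x:P_J^{+}(n)\le y\}$ and a crude estimate for the count $\Psi(t,y)$ of $y$-smooth integers $n\le t$. Since always $h_J(x,y)\le\lfloor x\rfloor\le x$, there is nothing to prove once $\log y\ge\log x/(\log\log x)^{J-1}$, and any bounded range of $x$ can be absorbed into the implied constant; so I may assume $x$ is large (in terms of $J$) and $\log y\le\tfrac14\log x$, whence $y\le x^{1/4}$ and $\log(x/y^2)\ge\tfrac12\log x$.

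The smooth-number input is the bound $\Psi(t,y)\ll t\,\frac{\log y}{\log t}$, valid for $t\ge y\ge 2$. To get it, observe that a $y$-smooth integer $n\le t$ has no prime factor in $(y,\sqrt t\,]$; for $t\ge 9$ this lets us apply Lemma \ref{lem:sievelem} with the interval $(0,t]$, the set of primes $\mathcal P=\{p:y<p\le\sqrt t\,\}$, parameter $Z=\sqrt t$, and residue classes $a_p=0$. The fundamental-lemma error is then $O(\exp(-1))$, so $\Psi(t,y)\ll t\prod_{y<p\le\sqrt t}(1-1/p)$, which is $\ll t\log y/\log t$ by Mertens' theorem. (When $y>\sqrt t$, or $t<9$, the bound is immediate from $\Psi(t,y)\le t$.) In particular this is exactly the assertion of the lemma in the degenerate case $J=1$, where $h_1=\Psi$.

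For the recursion, let $n\le x$ have $P_J^{+}(n)\le y$ and $P^{+}(n)>y$, and put $q=P^{+}(n)$, $n'=n/q$. Then $q\in(y,x]$, $n'\le x/q$, and since $n'$ has exactly one fewer prime factor exceeding $y$ than $n$ does (counted with multiplicity) we get $P_{J-1}^{+}(n')\le y$. As $n\mapsto(q,n')$ is injective, this yields
\[ h_J(x,y)\ \le\ \Psi(x,y)\ +\ \sum_{y<q\le x}h_{J-1}(x/q,\,y). \]
Now I would induct on $J$, with inductive hypothesis the statement of the lemma with $J-1$ in place of $J$ (the case $J-1=1$ being the smooth-number bound above). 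Split the $q$-sum at $q=x/y^2$. When $y<q\le x/y^2$ we have $x/q\ge y^2\ge y$, so the inductive hypothesis applies and this range contributes $\ll_{J-1}x\log y\,(\log\log x)^{J-2}\sum_{y<q\le x/y^2}\frac{1}{q\log(x/q)}$. When $x/y^2<q\le x$ we have $x/q<y^2$ and I use only the trivial bound $h_{J-1}(x/q,y)\le x/q$, contributing at most $x\sum_{x/y^2<q\le x}1/q$. It remains to estimate the two sums: splitting $\sum_{y<q\le x/y^2}\frac{1}{q\log(x/q)}$ at $q=\sqrt x$ (where $\log(x/q)\ge\tfrac12\log x$) and decomposing the remaining range dyadically, $q\in(e^k,e^{k+1}]$, using $\sum_{q\in(e^k,e^{k+1}]}1/q\ll 1/k$ and $\log(x/q)\ge\log x-k-1$ together with $\sum_k\frac{1}{k(\log x-k-1)}\ll\frac{\log\log x}{\log x}$, gives $\sum_{y<q\le x/y^2}\frac{1}{q\log(x/q)}\ll\frac{\log\log x}{\log x}$; while $\sum_{x/y^2<q\le x}1/q=\log\log x-\log\log(x/y^2)+O(1/\log x)\ll\frac{\log y}{\log x}$ because $\log(x/y^2)=\log x-2\log y\ge\tfrac12\log x$. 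Together with $\Psi(x,y)\ll x\log y/\log x$, adding the three pieces gives $h_J(x,y)\ll_J x\,\frac{\log y}{\log x}(\log\log x)^{J-1}$, completing the induction.

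The place where care is needed is isolating the factor $\log y/\log x$: it enters solely through the smooth-number bound, which in turn relies on the mild but essential observation that in the fundamental lemma one should sieve only up to $\sqrt t$, where the error term is a harmless $O(1)$ — sieving all the way up to $t^{1-\epsilon}$ would ruin the error term. One must also be a little careful with $\sum_{y<q\le x/y^2}\frac{1}{q\log(x/q)}$, whose ``mass'' concentrates near $q\approx\sqrt x$ — exactly where $\log(x/q)\asymp\log x$ — so that it is genuinely of order $\frac{\log\log x}{\log x}$ and not larger. Finally, the initial reduction to small $y$ is what makes the tail $x/y^2<q\le x$, where $x/q$ is too small to carry any nontrivial smooth-number estimate, contribute only $O(x\log y/\log x)$.
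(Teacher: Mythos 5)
Your proof is correct but takes a genuinely different route from the paper's. The paper decomposes each $n$ as $n=AB$ with $A$ the largest $y$-smooth divisor (so $\Omega(B)<J$) and splits into the two cases $A\le x^{1/2}$ and $B\le x^{1/2}$: in the first it applies Landau's theorem on integers with a bounded number of prime factors to count $B$, then sums $1/A$ over $y$-smooth $A$ to pick up the factor $\log y$; in the second it sieves for the smooth cofactor $A\le x/B$ to get $\log y/\log x$ and bounds $\sum_{\omega(B)\le J-1}1/B\ll_J(\log\log x)^{J-1}$ by expanding powers of $\sum_{p^e\le x}1/p^e$. You instead peel off the largest prime factor one at a time and induct on $J$, with the smooth-number estimate $\Psi(t,y)\ll t\log y/\log t$ (itself a fundamental-lemma sieve bound, sieving only up to $\sqrt{t}$) as the $J=1$ base case; this avoids Landau's theorem at the cost of the dyadic analysis of $\sum_q\frac{1}{q\log(x/q)}$ and the careful split at $q=x/y^2$. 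Both approaches extract the crucial $\log y/\log x$ from the same sieve computation. The paper's version is a bit shorter and avoids the Mertens bookkeeping; yours is arguably more self-contained, and the peeling recursion makes the combinatorial structure more transparent, nicely foreshadowing the decomposition $n=mP_J\cdots P_1$ used later in the proof of Theorem~\ref{thm:joint}.
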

 
\begin{proof}
Suppose that $P_J^{+}(n)\le y$ and write $n=AB$, where $A$ is the largest divisor of $n$ composed of primes not exceeding $y$. 
Then $\omega(B) \le \Omega(B) < J$. 

Clearly, $A \le x^{1/2}$ or $B\le x^{1/2}$. Suppose first that $A \le x^{1/2}$. Then $B \le x/A$ and $\omega(B)\le J-1$, so that by a classical theorem of Landau (see \cite[Theorem 437, p. \ 491]{HW08}), given $A$ there are $\ll \frac{x}{A \log{(x/A)}} (\log\log{(x/A)})^{J-2}\ll \frac{x}{A\log{x}}(\log\log{x})^{J-2}$ possible $B$. Summing $1/A$ on $A$ with $P^{+}(A) \le y$ introduces a factor $\prod_{p\le y}(1-1/p)^{-1} \ll \log{y}$, which yields for this case a slightly stronger upper bound than that claimed in the lemma.

Suppose now that $B \le x^{1/2}$. Since $A$ has no prime factors larger than $y$,
the sieve shows that given $B$, the number of possible $A\le x/B$ is $\ll \frac{x}{B} \prod_{y < p \le x^{1/2}}(1-1/p) \ll \frac{x}{B}\frac{\log{y}}{\log{x}}$. Since
\[ \sum_{\substack{B \le x \\ \omega(B) \le J-1}}\frac{1}{B} \le \sum_{j=0}^{J-1}\frac{1}{j!} \left(\sum_{p^e \le x}\frac{1}{p^e}\right)^j \ll_{J} (\log\log{x})^{J-1}, \]
the result follows.
\end{proof}
 
\subsection{Character sums of polynomials} We require estimates for (complete, multiplicative) character sums of polynomials modulo prime powers. For prime moduli, we use the following version of the Weil bound.
 
\begin{lem}\label{lem:weil} Let $\mathbb{F}_q$ be a finite field, and let $\chi_1,\dots,\chi_K$ be characters of $\mathbb{F}_q^{\times}$, extended to all of $\mathbb{F}_q$ by setting $\chi_k(0)=0$. Let $F_1(T),\dots,F_{K}(T)\in \mathbb{F}_q[T]$ be nonzero and pairwise relatively prime. Assume that for some $1\le k \le K$, the polynomial $F_k(T)$ is not an $\ord(\chi_k)$th power in $\mathbb{F}_q[T]$ or a constant multiple of such. Then 
$$ \left|\sum_{x \in \mathbb{F}_q} \chi_1(F_1(x)) \cdots \chi_{K}(F_{K}(x))  \right| \le (\sum_{k=1}^{K}d_k - 1)\sqrt{q},$$
where $d_k$ denotes the degree of the largest squarefree divisor of $F_k(T)$.
\end{lem}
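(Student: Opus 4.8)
The plan is to collapse the product $\chi_1(F_1(x))\cdots\chi_{K}(F_{K}(x))$ into a single multiplicative character sum and then invoke the classical Weil bound in the form: if $\chi$ is a character of $\mathbb{F}_q^\times$ of order $m>1$ (extended by $\chi(0)=0$) and $G\in\mathbb{F}_q[T]$ has $d$ distinct roots in $\overline{\mathbb{F}_q}$ but is \emph{not} of the shape $c\,H(T)^m$ with $c\in\mathbb{F}_q^\times$ and $H\in\mathbb{F}_q[T]$, then $\bigl|\sum_{x\in\mathbb{F}_q}\chi(G(x))\bigr|\le(d-1)\sqrt q$ (see, e.g., Chapter~5 of Lidl--Niederreiter or Chapter~2 of Schmidt). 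We may assume $q\ge 3$: when $q=2$ the only character of $\mathbb{F}_q^\times$ is trivial, and the hypothesis on the $F_k$ then cannot be satisfied, so the lemma is vacuous.

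First I would fix a character $\lambda$ generating the cyclic group $\widehat{\mathbb{F}_q^\times}$, of order $q-1$, extended by $\lambda(0)=0$. Factor each $F_k$ into monic irreducibles, $F_k=c_k\prod_{i\in I_k}Q_i^{e_i}$; pairwise coprimality forces the polynomials $Q_i$ with $i\in I:=\bigcup_k I_k$ to be pairwise distinct, so for each $i\in I$ there is a unique index $k(i)$ with $i\in I_{k(i)}$. The character $\chi_{k(i)}^{\,e_i}$ equals $\lambda^{b_i}$ for a unique exponent $b_i$, which I choose in the range $1\le b_i\le q-1$ (legitimate since $\lambda^{q-1}$ is trivial). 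Put $G:=\prod_{i\in I}Q_i^{b_i}\in\mathbb{F}_q[T]$. A pointwise check—splitting into the case where every $Q_i(x)$ is nonzero and the case where some $Q_{i_0}(x)$ vanishes, and using $\chi_k(0)=0$ together with $e_i\ge 1$ and $b_i\ge 1$—shows $\prod_{k}\chi_k(F_k(x))=\bigl(\prod_k\chi_k(c_k)\bigr)\lambda(G(x))$ for \emph{every} $x\in\mathbb{F}_q$, with unimodular prefactor.

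It remains to verify the Weil hypothesis for $G$ and to count its distinct roots. For the index $k$ supplied by the hypothesis, $F_k$ fails to be a constant times an $\ord(\chi_k)$th power exactly when $\ord(\chi_k)\nmid e_i$ for some $i\in I_k$, i.e.\ when $\chi_k^{\,e_i}=\lambda^{b_i}$ is nontrivial for some such $i$; for that $i$ we have $1\le b_i\le q-2$, hence $q-1=\ord(\lambda)\nmid b_i$, so $G$ is not a constant times a $(q-1)$st power. The distinct roots of $G$ coincide with those of the squarefree polynomial $\prod_{i\in I}Q_i$, so their number is $\sum_{i\in I}\deg Q_i=\sum_{k}\deg\bigl(\prod_{i\in I_k}Q_i\bigr)=\sum_k d_k$, since $\prod_{i\in I_k}Q_i$ is the radical of $F_k$. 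Applying the Weil bound to $\lambda$ and $G$ gives $\bigl|\sum_x\lambda(G(x))\bigr|\le\bigl(\sum_k d_k-1\bigr)\sqrt q$, and multiplying by the unimodular prefactor finishes the proof.

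The only genuinely delicate point is the pointwise identity in the second paragraph. Reconciling the convention $\chi_k(0)=0$ with a single character $\lambda(G(x))$ is exactly why one must take the exponents $b_i$ strictly positive (rather than in $\{0,\dots,q-2\}$); and pairwise coprimality is what guarantees that the character attached to each irreducible factor $Q_i$ is the single power $\chi_{k(i)}^{\,e_i}$—not a product over several $k$—which is what lets the hypothesis on one $F_k$ be transported faithfully to $G$. The remaining steps are bookkeeping.
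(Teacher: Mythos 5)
Your proof is correct, and it takes a genuinely different route from the paper. The paper gives no argument at all for this lemma: it simply cites Corollary~2.3 of Wan \cite{wan97}, adding the remark that Wan's nontriviality hypothesis on the $\chi_k$ is never actually used in his proof (Wan's bound is itself proved via L-functions for curves over $\mathbb{F}_q$ and the Riemann hypothesis in that setting). You instead reduce everything to the classical single-character, single-polynomial Weil bound from Lidl--Niederreiter (or Schmidt): fix a generator $\lambda$ of the character group, write $\chi_{k(i)}^{e_i}=\lambda^{b_i}$ with $1\le b_i\le q-1$, set $G=\prod_i Q_i^{b_i}$, verify the pointwise identity $\prod_k\chi_k(F_k(x))=(\prod_k\chi_k(c_k))\lambda(G(x))$, and apply the Weil bound to $(\lambda,G)$. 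All the steps check out. The choice $b_i\ge 1$ (rather than $b_i\ge 0$) is precisely what makes the pointwise identity hold when some $Q_{i_0}(x)=0$, since then both sides vanish; and pairwise coprimality of the $F_k$ is exactly what makes the map $i\mapsto k(i)$ well-defined, so that the exponent $b_i$ comes from a single character and the hypothesis that one $F_k$ is not a constant times an $\ord(\chi_k)$th power transports cleanly to the statement that $q-1\nmid b_i$ for some $i$. Your approach also handles the case of trivial $\chi_k$ (by forcing $b_i=q-1$ for those indices, which keeps the vanishing set correct), which is exactly the point the authors flag about Wan's hypothesis. The trade-off: the paper's route is a one-line citation to a more general result; your route is longer but rests only on a textbook form of Weil's bound and makes the mechanism transparent.
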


Lemma \ref{lem:weil} is essentially Corollary 2.3 of \cite{wan97}. It is assumed in \cite{wan97} that all the $\chi_k$ are nontrivial, but this assumption is not used in the proof.

Estimating the sums to proper prime power moduli requires some stage setting. Let $p^m$ be an odd prime power, where $m\ge 2$. Let $g$ be a primitive root modulo $p^m$. Let $\chi$ be the Dirichlet character mod $p^m$ defined on integers $x$ coprime to $p$ by
\begin{equation}\label{eq:chidef} \chi(x) = \exp\left(2\pi \mathrm{i}\frac{ \mathrm{ind}_{g}(x)}{p^{m-1} (p-1)}\right), \end{equation}
where $g^{\mathrm{ind}_g(x)} \equiv x\pmod{p^m}$.

Let $F(T) \in \Z[T]$ be a nonconstant polynomial, and let $t$ be the largest nonnegative integer for which $p^t$ divides every coefficient of $F'(T)$.  Let $\tilde{F}(T)\in \mathbb{F}_p[T]$ denote the mod $p$ reduction of $p^{-t} F'(T)$. (Note that $\tilde{F}(T)$ is nonzero by the choice of $t$.) Let $\mathcal{A} \subset \mathbb{F}_p$ denote the set of roots of $\tilde{F}(T)$ in $\mathbb{F}_p$ that are not roots of the reduction of $F(T)$ mod $p$. For each $\alpha \in \mathcal{A}$, let $\nu_{\alpha}$ denote the multiplicity of $\alpha$ as a zero of $\tilde{F}(T)$, and let $M= \max_{\alpha \in \mathcal{A}} \nu_{\alpha}$. 

The following is an immediate consequence of Cochrane's Theorem 1.2 in \cite{cochrane02}; that very general result concerns mixed additive and multiplicative character sums, but see Theorem 2.1 of \cite{cochrane03} for the specialization  to  multiplicative character sums.

\begin{lem}\label{lem:cochrane} Under the above conditions, and the additional assumption that $m \ge t+2$,
we have
\[ \left|\sum_{x \bmod{p^m}} \chi(F(x))\right| \le (\sum_{\alpha \in \mathcal{A}} \nu_{\alpha}) p^{\frac{t}{M+1}} p^{m(1-\frac{1}{M+1})}.\]
\end{lem}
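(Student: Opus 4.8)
The plan is to deduce Lemma~\ref{lem:cochrane} directly from Cochrane's general estimate, so that the real content reduces to matching notation and checking that our hypotheses form a faithful instance of his. I would invoke Theorem~1.2 of \cite{cochrane02} in its purely multiplicative specialization, i.e.\ Theorem~2.1 of \cite{cochrane03}: this bounds a complete multiplicative character sum $\sum_{x\bmod{p^m}}\chi(F(x))$ to an odd prime power modulus in terms of the critical points mod $p$ of the logarithmic derivative $F'/F$. In Cochrane's language one takes the largest $t$ with $p^t$ dividing every coefficient of $F'$, forms the normalized reduction $\tilde{F}=(p^{-t}F')\bmod p$, collects the zeros of $\tilde{F}$ in $\mathbb{F}_p$ that are not zeros of $F\bmod p$ --- the set we have called $\mathcal{A}$ --- and lets $\nu_\alpha$ be the multiplicity of $\alpha$ as a zero of $\tilde{F}$, with $M=\max_{\alpha\in\mathcal{A}}\nu_\alpha$. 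Under the nondegeneracy hypothesis $m\ge t+2$, his bound reads exactly
\[ \Bigl|\sum_{x\bmod{p^m}}\chi(F(x))\Bigr| \le \Bigl(\sum_{\alpha\in\mathcal{A}}\nu_\alpha\Bigr)\,p^{\frac{t}{M+1}}\,p^{m(1-\frac{1}{M+1})}, \]
which is the assertion of the lemma; so the task is to present the deduction cleanly and confirm the dictionary.

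In carrying this out I would verify three things in order. First, that the character $\chi$ of \eqref{eq:chidef} is an admissible input: it is a Dirichlet character mod $p^m$ of order $p^{m-1}(p-1)$, hence primitive since $p$ is odd, and Cochrane imposes no restriction on $\chi$ beyond being a multiplicative character mod $p^m$ --- in particular full conductor is allowed. Second, that Cochrane's ``critical point'' data coincides with ours: cancellation in $\sum_x\chi(F(x))$ fails --- a residue class mod $p$ contributing its full size $p^{m-1}$ --- essentially only at zeros of $F'/F$, i.e.\ at zeros of $F'$ that are not zeros of $F$, so the roots of $F\bmod p$ are correctly excluded (on such a class $\chi(F(x))$ vanishes whenever $p\mid F(x)$, and the remaining terms do not generate a main-term contribution); and after dividing out the exact power of $p$ in the content of $F'$ --- the origin of the parameter $t$ and of the factor $p^{t/(M+1)}$ --- the surviving count of critical points with multiplicity is $\sum_{\alpha\in\mathcal{A}}\nu_\alpha$, with maximal degeneracy $M$, matching our definitions. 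Third, that the exponent $m(1-\frac{1}{M+1})$ and the threshold $m\ge t+2$ are precisely what Cochrane's general exponent and triviality condition specialize to. In passing I would recall the mechanism behind the bound, without reproducing it: the $p$-adic stationary phase method of Cochrane and Zheng --- substitute $x=x_0+p^\ell y$ for an $\ell$ dictated by the local order of vanishing of $F'/F$ at $x_0$, use the Taylor expansion and orthogonality to kill the non-critical classes once $m\ge t+2$, and recurse at the critical classes, where $\nu$-fold vanishing yields the characteristic gain of shape $p^{1/(\nu+1)}$; summing over the surviving classes produces the stated bound.

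The main obstacle is one of fidelity rather than of mathematics. Cochrane's papers adopt their own normalizations --- for the ``order'' parameter, for what counts as a critical point, and occasionally they assume the content of $F$ itself (rather than of $F'$) to be prime to $p$ --- so the delicate point is confirming that $t$, $\mathcal{A}$, $\nu_\alpha$, $M$ as defined in the excerpt genuinely correspond to his invariants, and that no degenerate edge case has been missed: for instance $\mathcal{A}=\emptyset$, where the bound is to be read with the convention that an empty sum of multiplicities is $0$ and a separate elementary argument shows the complete sum vanishes, or the excluded situation where $t$ is so large that $m\ge t+2$ fails. Once the dictionary is pinned down, the lemma follows immediately, as the excerpt asserts.
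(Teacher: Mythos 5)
Your proposal matches the paper exactly: the paper offers no independent proof of Lemma~\ref{lem:cochrane} but simply cites it as an immediate consequence of Cochrane's Theorem~1.2 in \cite{cochrane02} (equivalently Theorem~2.1 of \cite{cochrane03}), which is precisely what you do, with the extra care of spelling out the notational dictionary and the edge cases. No discrepancy to flag.
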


The proof of Theorem \ref{thm:joint} depends on the following consequence of Lemmas \ref{lem:weil} and \ref{lem:cochrane}, which seems of some independent interest.

\begin{prop}\label{prop:combinedcharsum} Let $F_1(T), \dots, F_K(T) \in \Z[T]$ be nonconstant and assume that the product $F_1(T) \cdots F_K(T)$ has no multiple roots. Let $p$ be an odd prime not dividing the leading coefficient of any of the $F_k(T)$ and not dividing the discriminant of $F_1(T) \cdots F_K(T)$. Let $m$ be a positive integer, and let $\chi_1,\dots,\chi_K$ be Dirichlet characters modulo $p^m$, at least one of which is primitive. Then \begin{equation}\label{eq:cancelation} \left|\sum_{x\bmod{p^m}} \chi_1(F_1(x)) \cdots \chi_K(F_K(x))\right| \le (D-1) p^{m(1-1/D)},\end{equation}
where $D = \sum_{k=1}^{K} \deg{F_k(T)}$.
\end{prop}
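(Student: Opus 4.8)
The plan is to split into two cases according to the level $m$ of the prime-power modulus, since the two regimes call for different inputs: the Weil bound (Lemma~\ref{lem:weil}) when $m=1$, and Cochrane's estimate (Lemma~\ref{lem:cochrane}) when $m\ge 2$. In both cases the hypotheses that $p$ avoids the leading coefficients and the discriminant of $\prod_k F_k$ ensure that, upon reduction mod $p$, the $F_k(T)$ remain nonconstant of the same degrees, are squarefree, and are pairwise coprime (no two share a root mod $p$, else $p$ would divide the discriminant). I will use these facts repeatedly.

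For the case $m=1$: here each $\chi_k$ is a character mod $p$, and the one that is ``primitive'' is simply nontrivial, say $\chi_{k_0}$ is nontrivial. I want to apply Lemma~\ref{lem:weil} with $\mathbb{F}_q=\mathbb{F}_p$. I need to check the key hypothesis: for some $k$, $F_k(T)$ (reduced mod $p$) is not an $\ord(\chi_k)$th power times a constant. Take $k=k_0$. Since $\ord(\chi_{k_0})>1$ and $F_{k_0}(T)$ is a nonconstant squarefree polynomial mod $p$, it cannot be a constant multiple of a $d$th power for any $d\ge 2$ (a $d$th power with $d\ge 2$ is either constant or has a repeated root). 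This gives the hypothesis. Then Lemma~\ref{lem:weil} yields a bound of $(\sum_k d_k - 1)\sqrt p$ where $d_k=\deg F_k$ since each $F_k$ is squarefree mod $p$; thus $\sum_k d_k = D$ and the bound is $(D-1)p^{1/2}=(D-1)p^{m(1-1/D)}$ because $1-1/D \ge 1/2$... wait, that inequality goes the wrong way. Actually for $m=1$, $p^{m(1-1/D)}=p^{1-1/D}\ge p^{1/2}$ when $D\ge 2$, so $(D-1)p^{1/2}\le (D-1)p^{1-1/D}$ and the claimed bound follows. (When $D=1$ — possible only if $K=1$ and $F_1$ linear — the sum is a complete character sum of $\chi_1$ composed with a bijection, hence $0$, and the bound $0\cdot p^0=0$ holds trivially; I should dispatch this degenerate case separately, or simply note $D\ge 1$ always and handle $D=1$ by hand.)

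For the case $m\ge 2$: I first reduce to a single character. The product $\chi_1(F_1(x))\cdots\chi_K(F_K(x))$ equals $\psi(G(x))$ is not quite right since the $\chi_k$ differ, but I can write each $\chi_k = \chi^{a_k}$ where $\chi$ is the standard generator from~\eqref{eq:chidef} (every character mod $p^m$ is a power of $\chi$), so the sum is $\sum_x \chi\big(\prod_k F_k(x)^{a_k}\big)$. The ``primitive'' hypothesis means some $a_{k_0}$ is coprime to $p$ (so that $\chi^{a_{k_0}}$ has full order $p^{m-1}(p-1)$). Now I apply Lemma~\ref{lem:cochrane} to $\chi$ and the rational function $R(T)=\prod_k F_k(T)^{a_k}$ — here I must be careful, since Lemma~\ref{lem:cochrane} as stated is for polynomials $F(T)$. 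The cleaner route is: rather than combining, keep the product form and invoke the version of Cochrane's theorem for products of multiplicative characters of several polynomials (Theorem~2.1 of~\cite{cochrane03} in the mixed form is exactly this). The quantity controlling the bound is built from the critical points: one considers $\sum_k a_k F_k'(T)/F_k(T)$, clears denominators, and the relevant $t$ and $M$ come from there. The main point I need is that since $\prod_k F_k$ is squarefree mod $p$ and $p$ doesn't divide leading coefficients, the logarithmic-derivative numerator $\sum_k a_k F_k'\prod_{j\ne k}F_j$ has degree exactly $D-1$ mod $p$ (the top term is $(\sum_k a_k d_k)\,\mathrm{lc}\,T^{D-1}$ up to lower order — one must check this leading coefficient is nonzero mod $p$, which holds provided $p\nmid \sum_k a_k d_k$; if $p\mid \sum a_k d_k$ one gets degree $<D-1$, which only helps). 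The set $\mathcal A$ of relevant critical points has $\sum_{\alpha\in\mathcal A}\nu_\alpha \le D-1$, the parameter $M+1\le D$, and $t=0$ because $p$ divides no coefficient of the relevant derivative (again using $p\nmid$ leading coefficients); so Lemma~\ref{lem:cochrane} gives $\le (D-1)\,p^{m(1-1/(M+1))}\le (D-1)\,p^{m(1-1/D)}$, as desired.

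The main obstacle is the bookkeeping in the $m\ge 2$ case: verifying that the degree/multiplicity parameters appearing in Cochrane's bound are controlled by $D$, and handling the possibility that the naive leading coefficient of the logarithmic derivative vanishes mod $p$ (which forces one to argue that a drop in degree can only improve the bound, tracking how $M$ and the number of critical points change). A secondary subtlety is making the ``every character is a power of the fixed $\chi$, and primitivity of some $\chi_k$ means $p\nmid a_k$'' argument precise, and confirming that the hypotheses of the cited Cochrane result (oddness of $p$, the non-degeneracy condition that $\chi\circ R$ is genuinely ``wild'') are met — this is where primitivity of some $\chi_{k_0}$ is essential, exactly as nontriviality was essential in the Weil case.
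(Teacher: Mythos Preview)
Your approach matches the paper's: split into $m=1$ (Weil) and $m\ge 2$ (Cochrane), and in the latter case write each $\chi_k=\chi^{A_k}$ for the generator $\chi$ of \eqref{eq:chidef}, set $F(T)=\prod_k F_k(T)^{A_k}$, and control the Cochrane parameters via the log-derivative numerator $G(T)=\sum_k A_k F_k'(T)\prod_{j\ne k}F_j(T)$. Your $m=1$ case is correct, including the observation that $(D-1)p^{1/2}\le (D-1)p^{1-1/D}$ for $D\ge 2$ and the separate dispatch of $D=1$.

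The gap is your justification that $t=0$ when $m\ge 2$. You write ``$t=0$ because $p$ divides no coefficient of the relevant derivative (again using $p\nmid$ leading coefficients)'', but the leading coefficient of $G$ is $(\sum_k A_k d_k)\prod_k c_k$, and while $p\nmid\prod_k c_k$ by hypothesis, nothing you have said prevents $p\mid\sum_k A_k d_k$. Your remark that ``degree $<D-1$ only helps'' covers the bounds $\sum_\alpha\nu_\alpha\le D-1$ and $M+1\le D$, but it does not rule out the extreme case $G\equiv 0\pmod p$, which would give $t\ge 1$ (hurting the bound by a factor $p^{t/(M+1)}$ and, worse, violating the hypothesis $m\ge t+2$ of Lemma~\ref{lem:cochrane} when $m=2$). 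The paper's argument, which you gesture at in your final paragraph without carrying out, is this: for each $k$ choose a root $\alpha_k\in\overline{\mathbb{F}}_p$ of $F_k$; then $G(\alpha_k)=A_k\cdot F_k'(\alpha_k)\prod_{j\ne k}F_j(\alpha_k)$, and the factor multiplying $A_k$ is nonzero since $\prod_j F_j$ is squarefree mod $p$. Hence $G\equiv 0\pmod p$ forces $p\mid A_k$ for every $k$, so no $\chi_k=\chi^{A_k}$ is primitive mod $p^m$, contradicting the hypothesis. This is exactly where primitivity is used. With $t=0$ in hand, one also needs the small observation (implicit in your sketch) that each $\alpha\in\mathcal A$, being a root of $F'=(\prod_k F_k^{A_k-1})G$ but not of $F$, must be a root of $G$ with the same multiplicity $\nu_\alpha$; then $\sum_\alpha\nu_\alpha\le\deg G\le D-1$ and Lemma~\ref{lem:cochrane} gives the claim.
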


\begin{proof} Take first the case when $m=1$. When $D=1$, the left-hand side of \eqref{eq:cancelation} vanishes and \eqref{eq:cancelation} holds. When $D\ge 2$, we apply Lemma \ref{lem:weil} with $q=p$. The mod $p$ reductions of the $F_k(T)$ are nonzero (in fact, of the same degree as their counterparts in $\Z[T]$), and $F_1(T)\cdots F_K(T)$ is squarefree over $\mathbb{F}_p$, so that each $F_k(T)$ is squarefree and the $F_k(T)$ are pairwise relatively prime in $\mathbb{F}_p[T]$. Since some $\chi_k$ is primitive, it has order larger than $1$, and so $F_k(T)$ is not an $\mathrm{ord}(\chi_k)$th power in $\mathbb{F}_q[T]$ or a constant multiple of such. Lemma \ref{lem:weil} now yields \eqref{eq:cancelation}.

Henceforth, we suppose that $m\ge 2$. Let $g$ be a primitive root mod $p^m$, and let $\chi$ be the character mod $p^m$ defined in \eqref{eq:chidef}. We can write each $\chi_k$ in the form $\chi^{A_k}$, where $0 < A_k \le p^{m-1}(p-1)$. Then
\begin{equation}\label{eq:reexpression} \sum_{x\bmod{p^m}} \chi_1(F_1(x)) \cdots \chi_K(F_K(x)) = \sum_{x\bmod{p^m}}\chi(F(x)), \end{equation}
where 
\[ F(T) := F_1(T)^{A_1} \cdots F_K(T)^{A_K}. \]
Also,
\[ F'(T) = \left(\prod_{k=1}^{K} F_k(T)^{A_k-1}\right) G(T),\quad\text{where}\quad G(T):= \sum_{k=1}^{K} \Bigg(A_k F_k'(T) \prod_{\substack{1\le j \le K \\ j\ne k}}F_j(T)\Bigg).  \]
Let $t$ be the largest integer for which $p^t$ divides all the coefficients of $F'(T)$. Since none of the $F_k(T)$ are multiples of $p$, the power $p^t$ is also the largest power of $p$ dividing all the coefficients of $G(T)$ (by Gauss's content lemma). 

We claim that $t=0$. Choose, for each $k=1,\dots,K$, a root $\alpha_k$ of $F_k(T)$ from the algebraic closure $\overline{\mathbb{F}}_p$ of $\mathbb{F}_p$. Then in $\overline{\mathbb{F}}_p$, $$ G(\alpha_k)= (F_k'(\alpha_k) \prod_{\substack{1\le j \le K \\ j\ne k}}F_j(\alpha_k)) A_k,$$ and the factor in front of $A_k$ is nonzero. But if $t>0$, then $G(T)$ induces the zero function on $\overline{\mathbb{F}}_p$, forcing each $A_k$ to be a multiple of $p$. Then none of the $\chi_k$ are primitive characters mod $p^m$, contrary to hypothesis.

Now let $\mathcal{A}$, $\nu_\alpha$, and $M$ be defined as in the discussion preceding Lemma \ref{lem:cochrane}. Then each $\alpha \in \mathcal{A}$ is a root in $\mathbb{F}_p$ of the mod $p$ reduction of $G(T)$ of multiplicity $\nu_{\alpha}$. Moreover, $M \le \sum_{\alpha\in \mathcal{A}} \nu_{\alpha}\le \deg{G(T)} \le D-1$. The desired upper bound \eqref{eq:cancelation}  follows from \eqref{eq:reexpression} and Lemma \ref{lem:cochrane}.
\end{proof}

\section{Proof of Theorem \ref{thm:joint}} Throughout this proof, we  suppress the dependence of implied constants or implied lower/upper bounds on the constant $\epsilon>0$ as well as the family $f_1,\dots,f_K$. We let $F_1(T),\dots,F_K(T) \in \Z[T]$ be such that $f_k(p) = F_k(p)$ for all primes $p$.  We put \[ J:=(K+1)D \]  where, anticipating an application of Proposition \ref{prop:combinedcharsum},
\[ D := 1 + \sum_{k=1}^{K} \deg{F_k}(T). \]

It will be convenient to introduce the notation
\[ \sum\nolimits_{\ff}(x;q) := \sum_{\substack{n\le x \\ \gcd(f(n),q)=1}} 1. \]
Throughout this proof, when we say a term is \textsf{ignorable}, we mean that it is of smaller order than the right-hand side of \eqref{eq:joint}, that is, $o(\phi(q)^{-K} \sum_{\ff}(x;q))$.

By Lemma \ref{lem:coprimeasymp} (with $f=f_1\cdots f_K$) and the remark following it, we find that $$  \phi(q)^{-K} \sum\nolimits_{\ff}(x;q) \ge q^{-K} x(\log{x})^{o(1)} \ge x (\log{x})^{K\epsilon+o(1)}/\log{x} \ge x(\log{x})^{\epsilon+o(1)}/\log{x}.$$ So Lemma \ref{lem:semismooth} allows us to discard from the left-hand side of \eqref{eq:joint} those $n$ for which $P_J^{+}(n)\le L$, where
\[ L:= \exp((\log{x})^{\frac{1}{2}\epsilon}), \]
at the cost an ignorable error. Write each remaining $n$ in the form $n=m P_J \cdots P_1$, where each $P_j = P_j^{+}(n)$. We keep only those $n$ where $P^{+}(m) < P_J < \dots < P_1$. Any $n$ discarded at this step has a repeated prime factor exceeding $L$, and there are $O(x/L)$ of these, which is again ignorable. Note that for all of the remaining $n$, we have $f(n) = f(m) f(P_J) \cdots f(P_1)$, where each $P_j > L_m$ with
\[ L_m := \max\{P^{+}(m),L\}.\]

By the observations of the last paragraph, it suffices to prove that
\[ \sum\nolimits_{\ff}(x;q,\mathbf{a}) \sim \frac{1}{\phi(q)^K} \sum\nolimits_{\ff}(x;q), \]
where
\begin{align}\sum\nolimits_{\ff}(x;q,\mathbf{a}):&= \sum_{\substack{m\le x\\ \gcd(\prod_{k=1}^{K}f_k(m),q)=1}} \sum_{\substack{P_1, \dots, P_J\\ P_1\cdots P_J \le x/m \\ L_m < P_J < \dots < P_1 \\ (\forall k)~f_k(m)\prod_{j=1}^{J} f_k(P_j) \equiv a_k\pmod{q}}} 1 \notag\\
&=\sum_{\substack{m\le x\\ \gcd(\prod_{k=1}^{K}f_k(m),q)=1}}\frac{1}{J!}\sum_{\substack{P_1, \dots, P_J \text{ distinct}\\ P_1\cdots P_J \le x/m \\ \text{each $P_j > L_m$} \\ (\forall k)~f_k(m)\prod_{j=1}^{J} f_k(P_j) \equiv a_k\pmod{q}}} 1.\label{eq:sumfound}
\end{align}
We now remove the distinctness restriction in the final inner sum. Estimating crudely, this incurs  an error of size $O(x/mL)$ in the inner sum and an error of size $O(x\log{x}/L)$ in the double sum.  

For each $k=1,2,\dots,K$, let $u_k$ denote a value of $f_k(m)^{-1} a_k$ mod $q$ and define
\[ V_m := \{(v_1\bmod{q},\dots,v_J\bmod{q}): \gcd(v_1\cdots v_J,q)=1,~(\forall k)\, \prod_{j=1}^{J} F_k(v_j) \equiv u_k\pmod{q}\}. \]
Then writing $\mathbf{v} = (v_1\bmod{q},\dots, v_j\bmod{q})$,
\[ \sum_{\substack{P_1, \dots, P_J \\ P_1\cdots P_J \le x/m \\ \text{each $P_j > L_m$} \\ (\forall k)~f_k(m)\prod_{j=1}^{J} f_k(P_j) \equiv a_k\pmod{q}}} 1 = \sum_{\mathbf{v} \in V_m}  \sum_{\substack{P_1, \dots, P_J \\ P_1\cdots P_J \le x/m \\ \text{each $P_j > L_m$} \\ (\forall j)~P_j \equiv v_j \pmod{q} }} 1.  \]

For each $\mathbf{v} \in V_m$, we show how to remove the right-hand congruence  conditions on the $P_j$. First we handle $P_1$. Noting that $q\le (\log{x}) = (\log{L})^{2/\epsilon}$, the Siegel--Walfisz theorem (see, for example, \cite[Corollary 11.21]{MV07}) implies that for a certain positive constant $C=C_{\epsilon}$, 
\begin{align*} \sum_{\substack{P_1, \dots, P_J \\ P_1\cdots P_J \le x/m \\ \text{each $P_j > L_m$} \\ (\forall j)~P_j \equiv v_j \pmod{q} }} &1 = \sum_{\substack{P_2, \dots, P_J \\ P_2\cdots P_J \le x/m \\ \text{each $P_j > L_m$} \\ (\forall j\ge 2)~P_j \equiv v_j \pmod{q} }} \sum_{\substack{L_m < P_1 \le \frac{x}{mP_2\cdots P_J} \\ P_1 \equiv v_1\pmod{q}}} 1 \\
&\hskip-0.2in= \sum_{\substack{P_2, \dots, P_J \\ P_2\cdots P_J \le x/m \\ \text{each $P_j > L_m$} \\ (\forall j\ge 2)~P_j \equiv v_j \pmod{q} }} \left(\frac{1}{\phi(q)}\sum_{L_m < P_1 \le \frac{x}{mP_2\cdots P_J}} 1 + O\left(\frac{x}{m P_2\cdots P_J} \exp(-C\sqrt{\log{L}}) \right)\right) \\
&\hskip-0.2in= \frac{1}{\phi(q)}\sum_{\substack{P_1,P_2, \dots, P_J \\ P_1\cdots P_J \le x/m \\ \text{each $P_j > L_m$} \\ (\forall j\ge 2)~P_j \equiv v_j \pmod{q}}}1 + O\left(\frac{x}{m} \exp\left(-\frac{1}{2}C\sqrt{\log{L}}\right)\right).
\end{align*}
In the same way, the congruence conditions on $P_2,\dots,P_J$ can be removed successively to yield
\[  \sum_{\substack{P_1, \dots, P_J \\ P_1\cdots P_J \le x/m \\ \text{each $P_j > L_m$} \\ (\forall j)~P_j \equiv v_j \pmod{q} }} 1 = \frac{1}{\phi(q)^J}\sum_{\substack{P_1,P_2, \dots, P_J \\ P_1\cdots P_J \le x/m \\ \text{each $P_j > L_m$}}} 1 + O\left(\frac{x}{m} \exp\left(-\frac{1}{2}C\sqrt{\log{L}}\right)\right).\] The main term on the right-hand side is independent of $\mathbf{v}$. Keeping in mind that $\#V_m \leq q^J \le (\log{x})^J$ for all $m$, we deduce from \eqref{eq:sumfound} that
\begin{equation}\label{eq:sumf} \sum\nolimits_{\ff}(x;q,\mathbf{a}) =  \sum_{\substack{m\le x\\ \gcd(\prod_{k=1}^{K}f_k(m),q)=1}}\frac{\#V_m}{\phi(q)^J}  \cdot \frac{1}{J!}\sum_{\substack{P_1, \dots, P_J \\ P_1\cdots P_J \le x/m \\ \text{each $P_j > L_m$}}}1  + O\left(x \exp\left(-\frac{1}{4}C\sqrt{\log{L}}\right)\right).\end{equation}
To handle the main term, notice that
\[ \sum_{\substack{P_1, \dots, P_J \\ P_1\cdots P_J \le x/m \\ \text{each $P_j > L_m$} \\ \text{some $\gcd(f(P_j),q)>1$}}} 1 \le J \sum_{p \mid q} \sum_{\substack{P_1, \dots, P_J \\ P_1\cdots P_J \le x/m \\ \text{each $P_j > L_m$} \\ p \mid f(P_1)}} 1.   \]
The condition that $p\mid f(P_1)$ puts $P_1$ in a certain (possibly empty) set of $O(1)$ residue classes mod $p$. Removing these congruence condition by the Siegel--Walfisz theorem (exactly as above) we find that (with $C$ as above)
\[ \sum_{\substack{P_1, \dots, P_J \\ P_1\cdots P_J \le x/m \\ \text{each $P_j > L_m$} \\ p \mid f(P_1)}} 1 \ll \frac{1}{p}\sum_{\substack{P_1, \dots, P_J \\ P_1\cdots P_J \le x/m \\ \text{each $P_j > L_m$}}} 1 + \frac{x}{m} \exp\left(-\frac{1}{2}C\sqrt{\log{L}}\right) \]
and so
\[ \sum_{\substack{P_1, \dots, P_J \\ P_1\cdots P_J \le x/m \\ \text{each $P_j > L_m$} \\ \text{some $\gcd(f(P_j),q)>1$}}} 1 \ll \delta(q)\sum_{\substack{P_1, \dots, P_J \\ P_1\cdots P_J \le x/m \\ \text{each $P_j > L_m$}}} 1 + \frac{x}{m} \exp\left(-\frac{1}{4}C\sqrt{\log{L}}\right).  \]
Since $\delta(q)=o(1)$, 
\begin{align*}
\sum_{\substack{P_1, \dots, P_J \\ P_1\cdots P_J \le x/m \\ \text{each $P_j > L_m$}}}1
 &= (1+O(\delta(q))) \sum_{\substack{P_1, \dots, P_J \\ P_1\cdots P_J \le x/m \\ \text{each $P_j > L_m,~ \gcd(f(P_j),q)=1$}}}1 + O\left(\frac{x}{m} \exp\left(-\frac{1}{4}C\sqrt{\log{L}}\right)\right) \\
 &=  (1+O(\delta(q))) J! \sum_{\substack{P_J < \dots < P_1 \\ P_1\cdots P_J \le x/m \\ \text{each $P_j > L_m,~\gcd(f(P_j),q)=1$}}}1 + O\left(\frac{x}{m} \exp\left(-\frac{1}{4}C\sqrt{\log{L}}\right)\right).
\end{align*}
The following claim will be established at the end of this section as an application of Proposition  \ref{prop:combinedcharsum}.

\textbf{Claim.} $\#V_m \sim q^{J}/\phi(q)^K$, uniformly in $m$.

We insert the estimate of the Claim, together with the last display, into \eqref{eq:sumf}.  Since $\delta(q)=o(1)$, we have $\frac{q^J}{\phi(q)^J} (1+O(\delta(q))) = 1+o(1)$. We find that up to an ignorable error, $\sum\nolimits_{\ff}(x;q,\mathbf{a})$ is equal to 
\[ (1+o(1)) \frac{1}{\phi(q)^K} \sum_{\substack{m\le x \\ \gcd(\prod_{k=1}^{K}f_k(m),q)=1}} 
\sum_{\substack{L_m < P_J < \dots < P_1 \\ P_1\cdots P_J \le x/m \\ \text{each $\gcd(f(P_j),~q)=1$}}}1. \]
We can view the double sum as counting  those numbers $n\le x$ with $\gcd(f(n),q)=1$ and certain extra constraints: Namely, the $J$th largest prime factor of $n$ exceeds $L$ and none of the largest $J$ prime factors are repeated. But (by reasoning seen at the start of this proof) dropping the extra constraints incurs an ignorable error. So up to an ignorable error, $\sum_{\ff}(x;q,\mathbf{a})$ is equal to $\frac{(1+o(1))}{\phi(q)^K} \sum_{\ff}(x;q)$. By  definition of \textsf{ignorable},  $\sum_{\ff}(x;q,\mathbf{a}) \sim \frac{1}{\phi(q)^K} \sum_{\ff}(x;q)$, and we have seen already that this suffices to complete the proof of Theorem \ref{thm:joint}.
\begin{proof}[Proof of the Claim.]
Using $\chi_0$ for the trivial character mod $q$, orthogonality yields
\begin{align}
\phi(&q)^K\#V_m \notag \\&=  \sum_{\chi_1,\dots,\chi_K\bmod{q}} \bigg(\prod_{k=1}^{K} \bar{\chi}_k(u_k)\bigg)\bigg(\sum_{x_1,\dots,x_J \bmod{q}} \chi_0(\prod_{j=1}^{J} x_j) \chi_1(\prod_{j=1}^{J}F_1(x_j))\cdots \chi_K(\prod_{j=1}^{J}F_K(x_j))\bigg) \notag\\ &= \sum_{\chi_1,\dots,\chi_K\bmod{q}} \bigg(\prod_{k=1}^{K} \bar{\chi}_k(u_k)\bigg) S_{\chi_1,\dots,\chi_K}^{J}, \label{eq:ssum}
\end{align}
where
\[ S_{\chi_1,\dots,\chi_K}:= \sum_{x\bmod{q}} \chi_0(x) \chi_1(F_1(x)) \cdots \chi_K(F_K(x)). \]
The number of $x$ mod $q$ where one of $x, F_1(x),\dots,F_K(x)$ has a common factor with $q$ is $\ll q \delta(q) = o(q)$, and so the tuple  $\chi_1,\dots\chi_K$ of trivial characters makes a contribution $\sim q^{J}$ to \eqref{eq:ssum}. So to complete the proof, it suffices to show that
\begin{equation}\label{eq:notalltrivial} \sum_{\substack{\chi_1,\dots,\chi_K \bmod{q}\\\text{not all trivial}}} |S_{\chi_1,\dots,\chi_K}|^{J} \end{equation} 
has size $o(q^J)$.

Assume that $\chi_1,\dots,\chi_K$ are Dirichlet characters mod $q$, not all of which are trivial.
Factor $q= \prod_{p\mid q}p^{e_p}$. Each character $\chi_k$, for $k=0,1,\dots,K$, admits a unique decomposition of the form $\chi_k = \prod_{p\mid q} \chi_{k,p}$, where $\chi_{k,p}$ is a Dirichlet character modulo $p^{e_p}$. By the \textsf{type} of the tuple $\chi_1,\dots,\chi_K$, we mean the $\omega(q)$-element sequence of positive integers  $\{\mathfrak{f}_p\}_{p\mid q}$, where each 
\[ \mathfrak{f}_p = \mathrm{lcm}[\mathfrak{f}(\chi_{1,p}),\dots,\mathfrak{f}(\chi_{K,p})]. \]

Write $q=q_0 q_1$, where $q_1$ is the unitary divisor of $q$ supported on the primes $p\mid q$ for which $\mathfrak{f}_p > 1$.  Note that $q_1>1$, since not all of $\chi_1,\dots,\chi_K$ are trivial. By the Chinese remainder theorem,
\[ S_{\chi_1,\dots,\chi_K} =  \prod_{p\mid q} \bigg(\sum_{x\bmod{p^{e_p}}}\chi_{0,p}(x) \chi_{1,p}(F_1(x)) \cdots \chi_{K,p}(F_K(x))  \bigg),\]
from which we see that 
\begin{align*} |S_{\chi_1,\dots,\chi_K}| &\le q_0 \prod_{p\mid q_1} \bigg|\sum_{x\bmod{p^{e_p}}}\chi_{0,p}(x) \chi_{1,p}(F_1(x)) \cdots \chi_{K,p}(F_K(x))  \bigg| \\
&= q_0 \prod_{p\mid q_1} \frac{p^{e_p}}{\mathfrak{f}_p}  \bigg|\sum_{x\bmod{\mathfrak{f}_p}}\chi_{0,p}(x) \chi_{1,p}(F_1(x)) \cdots \chi_{K,p}(F_K(x))  \bigg|.  \end{align*} At least one of $\chi_{1,p},\dots,\chi_{K,p}$ has conductor $\mathfrak{f}_p$, and so the remaining sum on $x$ may be estimated by Proposition \ref{prop:combinedcharsum}, yielding
$|S_{\chi_1,\dots,\chi_K}|\le q (D-1)^{\omega(q_1)} \prod_{p\mid q_1} \mathfrak{f}_p^{-1/D}$. (If none of the $F_k(T)$ are multiples of $T$, we apply Proposition \ref{prop:combinedcharsum} with the polynomials $T, F_1(T), \dots, F_k(T)$; otherwise, the sum on $x$ is unchanged if we remove the term $\chi_{0,p}(x)$ and we apply the proposition with $F_1(T), \dots, F_k(T)$. Keep in mind that since $\delta(q)=o(1)$, all the prime factors of $q$ are large, so the nondivisibility conditions on $p$ in Proposition \ref{prop:combinedcharsum} are certainly satisfied.) Hence (since $J=(K+1)D$)
$|S_{\chi_1,\dots,\chi_K}|^{J} \le q^{J} (D-1)^{\omega(q_1) J} \prod_{p\mid q_1} \mathfrak{f}_p^{-(K+1)}$. 
There are no more than $(\prod_{p\mid q_1} \mathfrak{f}_p)^K$ tuples $\chi_1,\dots,\chi_K$ sharing this type, so that the contribution from all such tuples to  \eqref{eq:notalltrivial} is 
at most $q^{J} (D-1)^{\omega(q_1)J} \prod_{p\mid q_1} \mathfrak{f}_p^{-1}$. 
Summing  $\mathfrak{f}_p$ over all powers of $p$, for $p \mid q_1$, reveals that the contribution from all types corresponding to a given $q_1$ is at most
\[  q^{J} (D-1)^{\omega(q_1)J} \frac{q_1}{\phi(q_1)} \prod_{p\mid q_1} p^{-1} \le q^{J} (D-1)^{\omega(q_1)J} 2^{\omega(q_1)} \prod_{p\mid q_1} p^{-1}. \]
Finally, summing over all unitary divisors $q_1$ of $q$ with $q_1>1$ bounds \eqref{eq:notalltrivial} by
\[ q^J\left(\prod_{p \mid q}\left(1+\frac{2(D-1)^J}{p}\right)-1\right) \le q^J(\exp(2(D-1)^J\delta(q))-1) = o(q^J). \]
Collecting estimates completes the proof of the Claim.\end{proof}

\providecommand{\bysame}{\leavevmode\hbox to3em{\hrulefill}\thinspace}
\providecommand{\MR}{\relax\ifhmode\unskip\space\fi MR }
% \MRhref is called by the amsart/book/proc definition of \MR.
\providecommand{\MRhref}[2]{%
  \href{http://www.ams.org/mathscinet-getitem?mr=#1}{#2}
}
\providecommand{\href}[2]{#2}

\end{document}